\documentclass[11pt,epsf]{article}
\usepackage{graphicx}
\usepackage{amsthm}
\usepackage{amsfonts}
\usepackage{amssymb}
\headsep 0cm
\headheight 0cm
\topmargin 0cm
\evensidemargin 0cm
\oddsidemargin 0cm
\textheight 23cm
\textwidth 16cm
\unitlength 1mm
\title{On the complex conjugate zeros of the partial theta function}
\author{Vladimir Petrov Kostov\\ 
Universit\'e C\^ote d'Azur, CNRS, LJAD, France\\   
e-mail: vladimir.kostov@unice.fr} 
\date{}
\bibliographystyle{plain} 
\newtheorem{tm}{Theorem}

\newtheorem{rem}[tm]{Remark}
\newtheorem{rems}[tm]{Remarks}
\newtheorem{lm}[tm]{Lemma}

\begin{document} 
\maketitle 
\begin{abstract}
We prove that 1) for any $q\in (0,1)$, all complex conjugate pairs of zeros 
of the partial theta function $\theta (q,x):=\sum _{j=0}^{\infty}q^{j(j+1)/2}x^j$ 
belong to the set 
$\{$~Re\,$x\in (-5792.7,0),$~$|$Im\,$x|<132~\}$ $\cup$ 
$\{ ~|x|<18~\}$ and 2) for any $q\in (-1,0)$, they belong to the rectangle 
$\{$~$|$Re\,$x|< 364.2,$~$|$Im\,$x|<132~\}$.\\ 

{\bf Key words:} partial theta function, Jacobi theta function, 
Jacobi triple product\\ 

{\bf AMS classification:} 26A06 
\end{abstract}

\section{Introduction}

For any fixed $q\in \mathbb{C}$ ($|q|<1$), the sum of the bivariate series 
$\theta (q,x):=\sum _{j=0}^{\infty}q^{j(j+1)/2}x^j$ is an entire function in $x$ 
called the {\em partial theta function}. We remind that the Jacobi theta 
function equals $\Theta (q,x):=\sum _{j=-\infty}^{\infty}q^{j^2}x^j$ and that 
$\theta (q^2,x/q)=\sum _{j=0}^{\infty}q^{j^2}x^j$. 
We consider the case when $q$ is real. 
In the present note we prove the following theorem:

\begin{tm}\label{maintm}
(1) For any $q\in (0,1)$, all complex conjugate pairs of zeros 
of $\theta (q,.)$  
belong to the set 
$\{$~{\rm Re}\,$x\in (-5792.7,0),$~$|${\rm Im}\,$x|<132~\}$ $\cup$ 
$\{ ~|x|<18~\}$.

(2) For any $q\in (-1,0)$, all complex conjugate pairs of zeros 
of $\theta (q,.)$  
belong to the set $\{$~$|${\rm Re}\,$x|< 364.2,$~$|${\rm Im}\,$x|<132~\}$.
\end{tm}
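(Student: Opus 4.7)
My approach rests on the Jacobi triple product identity
\[
\prod_{n \geq 1}(1-Q^{2n})(1+Q^{2n-1}z)(1+Q^{2n-1}z^{-1}) \;=\; \sum_{j \in \mathbb{Z}} Q^{j^2} z^j.
\]
Setting $Q = q^{1/2}$ and $z = q^{1/2}x$ and using $j(j+1)/2 = j^2/2 + j/2$, the right-hand series becomes $\sum_{j\in\mathbb{Z}} q^{j(j+1)/2}x^j$. Splitting the sum at $j=0$ and reindexing the negative-$j$ tail gives $\theta(q,x) + \theta(q, 1/(qx)) - 1$, so we arrive at the fundamental identity
\[
\theta(q,x) + \theta(q, 1/(qx)) - 1 \;=\; \prod_{n\geq 1}(1-q^n)\prod_{n\geq 1}(1+q^n x)\prod_{n\geq 0}(1+q^n x^{-1}), \qquad (\star)
\]
whose right-hand side $R(q,x)$ has zeros precisely on the real negative axis, at $x=-q^{-n}$, $n \in \mathbb{Z}$.

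\textbf{Reduction and estimates.} If $\theta(q,x_0) = 0$, then $(\star)$ asserts $R(q,x_0) = 1 - \theta(q, 1/(qx_0))$. For $|x_0|$ large, a direct tail estimate gives $\theta(q,1/(qx_0)) - 1 = 1/x_0 + E(q,x_0)$ with $|E(q,x_0)| \leq \sum_{j\geq 2} q^{j(j-1)/2} |qx_0|^{-j}$, while $R(q,x)$ vanishes only on the real line. I would apply Rouch\'e's theorem on narrow horizontal strips around each real zero $-q^{-n}$: outside such strips the modulus of $R(q,x)$ exceeds that of the correction, so $\theta$ cannot vanish there, yielding the vertical bound $|\mathrm{Im}\,x| < 132$. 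For the horizontal bounds I would show that once $|x|$ exceeds the explicit threshold $5792.7$ (for $q \in (0,1)$) or $364.2$ (for $q \in (-1,0)$), the growth of $\prod(1+q^n x)$ in $R(q,x)$ cannot be matched by the correction, so no zero can lie there. The small-$|x|$ region $\{|x|<18\}$ is \emph{permitted} by the theorem, so nothing needs to be proved there.

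\textbf{Main obstacle.} The principal difficulty is uniformity in $q$. As $q \to 1^-$, the real zeros $-q^{-n}$ of $R$ accumulate rapidly and new complex-conjugate pairs of zeros of $\theta(q,\cdot)$ are born; the Rouch\'e contours must shrink at a controlled rate, and the growth of the factors $(1+q^n x)$ must be balanced against the shrinking separation. For $q \in (-1,0)$, alternating signs in the factors of $R$ break the left-half-plane confinement, which is why part (2) asserts a \emph{symmetric} rectangle rather than a region like in part (1); adapting the estimates to this sign-alternating case is the second source of technical work. Extracting the four sharp constants $5792.7$, $132$, $18$, and $364.2$ ultimately reduces to carefully optimising these explicit but delicate tail and product bounds over the one-parameter families $q \in (0,1)$ and $q \in (-1,0)$.
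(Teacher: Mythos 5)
Your identity $(\star)$ is correct and is in fact the very decomposition the paper uses: with $G(q,x):=\theta(q,1/(qx))-1=\sum_{j=-\infty}^{-1}q^{j(j+1)/2}x^j$ one has $\theta=\Theta^*-G$, and $\Theta^*$ is the triple product. The plan built on it, however, has a fatal gap in the horizontal bound. You propose to show that once $|x|$ exceeds $5792.7$ ``no zero can lie there'' because the growth of $\prod(1+q^nx)$ cannot be matched by the correction. This cannot work: for every $q\in(0,1)$ the function $\theta(q,\cdot)$ has infinitely many negative real zeros of arbitrarily large modulus (they track the zeros $-q^{-n}$ of the product), and near each such point the product itself is small, so no domination argument can exclude zeros from the whole half-plane $\mathrm{Re}\,x<-5792.7$. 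The theorem restricts only the \emph{complex conjugate pairs}, and any proof must separate these from the real zeros. The paper does this with two ingredients your sketch lacks: (i) every complex pair is born from a double real zero as $q$ increases, and all double zeros lie in a bounded interval $I=[-1226,0]$ (this relies on a separation-in-modulus result for the zeros of $\theta$); (ii) on the vertical lines $\mathrm{Re}\,x=-q^{-\nu-1/2}$, chosen multiplicatively halfway between consecutive zeros of the product, one has $|\Theta^*|>|G|$ for suitable $\nu$ depending on how close $q$ is to $1$, hence $\theta\neq0$ on those lines; a continuity-in-$q$ argument then shows that a complex pair, born inside $I$, can never cross such a zero-free line, and tracking the lines over a partition of $[\tilde q_1,1)$ is exactly where the constant $5792.7$ comes from. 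Without (i) and (ii) no constant can be extracted.

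Two further points. The bounds $|\mathrm{Im}\,x|<132$ and $|x|<18$ are not re-derived in the paper; they are quoted from earlier work, and your Rouch\'e-on-narrow-horizontal-strips idea would, if it worked, confine all complex zeros to thin neighbourhoods of the negative real axis, which is false for $q$ close to $1$, where the imaginary parts of the pairs are genuinely large; in any case it would not produce the specific constant $132$. Also, in your tail estimate note that $q^{j(j+1)/2}|qx_0|^{-j}=q^{j(j-1)/2}|x_0|^{-j}$, so writing $q^{j(j-1)/2}|qx_0|^{-j}$ double-counts a power of $q$ (a minor slip, and harmless since it only strengthens the majorant for $q>0$).
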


One of the most recent applications of $\theta$ is connected with a problem on 
{\em section-hyperbolic 
polynomials}, i.e. real univariate polynomials with all 
roots real and such that all their truncations of positive degree have also 
all their roots real. Inspired by classical results of Hardy, Petrovitch and 
Hutchinson (see \cite{Ha}, \cite{Pe} and \cite{Hu}), the study of the problem 
was continued in \cite{Ost}, \cite{KaLoVi} and \cite{KoSh}. 
Other areas of application of $\theta$ are modularity and asymptotics of 
regularized characters and 
quantum dimensions of the $(1,p)$-singlet algebra modules (see \cite{BFM} 
and \cite{CMW}), the theory 
of (mock) modular forms (see \cite{BrFoRh}), Ramanujan-type $q$-series 
(see \cite{Wa}), asymptotic analysis (see \cite{BeKi}) and statistical physics 
and combinatorics (see \cite{So}); see also~\cite{AnBe}.

The following properties of $\theta$ are proved in \cite{KoBSM1}: {\em 
For each $q\in (0,1)$, the function $\theta (q,.)$ has infinitely-many negative 
and no nonnegative zeros. There exists a sequence of values of $q$ (tending to 
$1^-$) $0=:\tilde{q}_0<\tilde{q}_1=0.3092\ldots <\tilde{q}_2=0.5169\ldots 
<\cdots <1$ 
for which 
$\theta$ has exactly one multiple zero. This is the rightmost of its (negative) 
zeros; it is a double one. For $q\in (\tilde{q}_j,\tilde{q}_{j+1}]$, 
$\theta (q,.)$ has exactly $j$ complex conjugate pairs of zeros counted with 
multiplicity.}

Analogous properties for $q\in (-1,0)$ read (see~\cite{KoPRSE2}): 
{\em For each $q\in (-1,0)$, the function $\theta (q,.)$ has infinitely-many 
negative 
and infinitely-many positive zeros. 
There exists a sequence of values $\bar{q}_j$ of $q$ (tending to 
$-1^+$, the largest of which is $\bar{q}_1=-0.727133\ldots$) 
for which 
$\theta$ has exactly one multiple zero; it is a double one. This is the 
rightmost of its negative 
zeros for $j$ odd and the second from the left of its positive zeros 
for $j$ even. For $q\in [\bar{q}_1,0)$, all zeros of $\theta$ are real.}

\section{Proof of Theorem~\protect\ref{maintm}}

\begin{proof}[Proof of part (1)]
We write ``CCP'' for ``complex conjugate pair (of zeros of $\theta (q,.)$)''. 
We use a result of \cite{KoPMD}: {\em For any $q\in (0,1)$, all zeros of 
$\theta (q,.)$ belong to the domain 
$\{ {\rm Re}~x<0, |{\rm Im}~x|<132\}$$\cup$$\{ {\rm Re}~x\geq 0, 
|x|<18\}$.} To prove part (1) of Theorem~\ref{maintm} means to show that the 
real parts of the CCPs are $>-5792.7$. 
The proof is based on a comparison between the functions $\theta$,  
$\Theta ^*:=\sum _{j=-\infty}^{\infty}q^{j(j+1)/2}x^j$ and 
$G:=\sum _{j=-\infty}^{-1}q^{j(j+1)/2}x^j$. Obviously, $\theta =\Theta ^*-G$. For the 
function $\Theta ^*$ we use the following formula (derived from the 
Jacobi triple product, see~\cite{KoFAA}):

$$\Theta ^*(q,x)=QPR~~~\,{\rm ,~where}~~~\, 
Q:=\prod _{m=1}^{\infty}(1-q^m)~~~,~~~P:=\prod _{m=1}^{\infty}(1+xq^m)~~~
{\rm and}~~~R:=\prod _{m=1}^{\infty}(1+q^{m-1}/x)~.$$
Consider the functions $\theta$, $\Theta ^*$ and $G$ restricted  
to the vertical line in the 
$x$-plane  
$\mathcal{L}_{\nu}(q)~:~$Re\,$x=-q^{-\nu -1/2}$, $\nu \in \mathbb{N}$ which  
avoids the zeros of $\Theta ^*$. The  
product $|Q|\cdot |P|\cdot |R|$ 
is minimal for $x=-q^{-\nu -1/2}$ 
(because for $g:=q^{m-1}$, $u<-1$ and $v\in \mathbb{R}^*$, one has 
$|1+g/(u+iv)|^2-|1+g/u|^2=-(2u+g)v^2g/(u^2(u^2+v^2))>0$ and 
$|1+(u+iv)q^m|>|1+uq^m|$). Obviously, 

\begin{equation}\label{majorizeG}
|G|\leq \sum _{j=0}^{\infty}|q|^{j(j-1)/2}|x|^{-j-1}\leq 
\sum _{j=0}^{\infty}|x|^{-j-1}=1/(|x|-1)~.
\end{equation}
When $x\in \mathcal{L}_{\nu}(q)$, the right-hand side is maximal for 
$x=-q^{-\nu -1/2}$. 
If for given $q$ and  
$\nu$ one has  

\begin{equation}\label{ineqThetaG}
|\Theta ^*(q,-q^{-\nu -1/2})|>|G(q,-q^{-\nu -1/2})|~,
\end{equation}
then the inequality $|\Theta ^*|>|G|$, 
hence  
$\Theta ^*-G\neq 0$, holds true along $\mathcal{L}_{\nu}(q)$ and 
the function $\theta$ has no zeros on the line $\mathcal{L}_{\nu}(q)$. 
We set $[\tilde{q}_1,1)=\cup _{n=1}^{\infty}K_n$, where 
$K_n:=[\tilde{q}_1,1-1/(n+1)]$.
We set also $\ell _1:=4$ and $\ell _n:=4(n+1)$, 
if $n\geq 2$. 
\begin{lm}\label{lmLnu}
For $q\in K_n$, $\nu \geq \ell _n$ and $x\in \mathcal{L}_{\nu}(q)$, 
one has $|\Theta ^*|>|G|$ 
(hence $\theta \neq 0$).
\end{lm}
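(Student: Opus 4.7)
The plan is to verify inequality (\ref{ineqThetaG}) at the single point $x_0:=-q^{-\nu-1/2}$; since $|\Theta^*|$ is minimal at $x_0$ on $\mathcal{L}_\nu(q)$ and the majorant $1/(|x|-1)$ of $|G|$ is maximal there, (\ref{ineqThetaG}) at $x_0$ propagates to $|\Theta^*|>|G|$ along the whole line $\mathcal{L}_\nu(q)$.

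First I would evaluate $|\Theta^*(q,x_0)|=Q\cdot|P|\cdot|R|$ in closed form. Substituting $x_0$ gives $R=\prod_{m\ge1}(1-q^{m+\nu-1/2})$ (all factors positive) and $P=\prod_{m\ge1}(1-q^{m-\nu-1/2})$. In $P$, split the factors $m\le\nu$ from those with $m\ge\nu+1$, and on the first group pull the (large) power $q^{m-\nu-1/2}$ out of the modulus; the arithmetic identity $\sum_{m=1}^{\nu}(\nu+1/2-m)=\nu^2/2$ produces a single algebraic factor $q^{-\nu^2/2}$ together with $\prod_{j=1}^{\nu}(1-q^{j-1/2})$, while the tail $m\ge\nu+1$ contributes $\prod_{j=1}^{\infty}(1-q^{j-1/2})$ after reindexing. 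Combining with $R$ yields
\begin{equation*}
|\Theta^*(q,x_0)|=q^{-\nu^2/2}\,Q(q)\Bigl(\prod_{j\ge 1}(1-q^{j-1/2})\Bigr)^{2}=:q^{-\nu^2/2}\,S(q).
\end{equation*}
Together with the bound $|G(q,x_0)|\le 1/(q^{-\nu-1/2}-1)$ coming from (\ref{majorizeG}), the lemma reduces to the purely scalar inequality
\begin{equation*}
q^{-\nu^2/2}\bigl(q^{-\nu-1/2}-1\bigr)\,S(q)>1.
\end{equation*}

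Second, the left-hand side is strictly (and super-exponentially) increasing in $\nu$ for each fixed $q\in(0,1)$, thanks to the factor $q^{-\nu^2/2}$. Hence it is enough to check the reduced inequality at the minimal admissible index $\nu=\ell_n$. Since $K_n$ is a compact subset of $(0,1)$, each of the three infinite products constituting $S(q)$ admits an explicit positive lower bound via standard Euler-product estimates, monotone in $q$, so the worst case is $q=1-1/(n+1)$. This converts the lemma into a single numerical inequality involving $\ell_n$ and the value $q=1-1/(n+1)$.

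The delicate point is to see that the chosen thresholds $\ell_1=4$ and $\ell_n=4(n+1)$ for $n\ge 2$ really do clear this inequality: near $q=1$ one expects $S(q)$ to decay like $\exp(-c/(1-q))$ while $q^{-\nu^2/2}$ grows like $\exp(\nu^2(1-q)/2)$, so matching the two exponents forces $\nu\gtrsim 1/(1-q)\sim n+1$, in line with the prescribed growth of $\ell_n$. Executing this balance with explicit (rather than merely asymptotic) constants, and absorbing the comparatively harmless factor $(q^{-\nu-1/2}-1)^{-1}$, is the main technical hurdle; the small case $n=1$, where $q\le 1/2$ and the products are not close to their degenerate limits, is essentially a direct numerical check and explains why the smaller threshold $\ell_1=4$ suffices.
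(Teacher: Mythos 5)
Your reduction is set up correctly, and in fact your exact identity
$|\Theta ^*(q,-q^{-\nu -1/2})|=q^{-\nu ^2/2}\,Q(q)\bigl(\prod _{j\geq 1}(1-q^{j-1/2})\bigr)^{2}$
is cleaner than what the paper writes down (the paper only uses the one-sided inequalities $R>Q$ and $P^{\sharp}>P^{\dagger}>(1-\sqrt{q})Q$ rather than observing that the finite block of $P$ and the tail of $R$ recombine into a full half-integer product). The monotonicity claims (worst case at $\nu =\ell _n$ and at $q=1-1/(n+1)$, minimality of $|\Theta ^*|$ and maximality of the majorant of $|G|$ at the real point of $\mathcal{L}_{\nu}(q)$) are all sound and coincide with the paper's framework.

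The genuine gap is that you stop exactly where the lemma's content begins: you acknowledge that verifying the scalar inequality $q^{-\nu ^2/2}(q^{-\nu -1/2}-1)S(q)>1$ at $\nu =\ell _n$, $q=1-1/(n+1)$ is ``the main technical hurdle'' and do not execute it. Since the whole point of the lemma is that the specific thresholds $\ell _1=4$ and $\ell _n=4(n+1)$ suffice, the proof is incomplete without this step. The paper closes it as follows. For $n=1$ it is a direct numerical check: $Q\geq \prod (1-2^{-j})=0.288\ldots$, $R>Q$, and the four explicit negative-exponent factors of $P$ give $|\Theta ^*|>0.388$ versus $|G|\leq 0.046$. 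For $n\geq 2$ it invokes the quantitative bound $Q\geq e^{-(\pi ^2/6)n}$ on $K_n$ (Lemma~4 of \cite{KoFAA}), the chain $P^{\dagger}>(1-\sqrt{q})Q$ with $(1-\sqrt{q})^2>1/4(n+1)^2$, and the minorization $q^{-\nu ^2/2}\geq ((1-1/x)^{-x}|_{x=n+1})^{8(n+1)}>e^{8(n+1)}$; the decisive point is that the exponent $8(n+1)$ beats $4\cdot (\pi ^2/6)n=6.57\ldots n$, yielding $|\Theta ^*|>e^{n+8}/4(n+1)^2\geq e^{10}/36$, while $|G|\leq 1/(e^4-1)<e^{10}/36$. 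Your heuristic exponent count (decay $\exp(-c/(1-q))$ versus growth $\exp(\nu ^2(1-q)/2)$) correctly predicts that $\nu \sim 4(n+1)$ is enough, but a heuristic is not a proof; you need the explicit constants, and the paper supplies them via the cited bound on $Q$.
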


\begin{proof}For $n=1$, one has $Q\geq \prod _{j=1}^{\infty}(1-2^{-j})=
0.288\ldots$ and for $\nu \in \mathbb{N}$, $\nu \geq \ell _1$, 
$R|_{x=-q^{-\nu -1/2}}>Q$. Denote by 
$P^{\bullet}$ the product $P|_{x=-q^{-\nu -1/2}}$ and set 
$P^{\dagger}:=\prod _{j=1}^{\infty}(1-q^{j-1/2})$. Thus

$$P^{\bullet}\geq P^{\flat}P^{\dagger}~~~,\, \, {\rm where}~~~
P^{\flat}:=(1-q^{-7/2})(1-q^{-5/2})(1-q^{-3/2})(1-q^{-1/2})$$
with equality only for $\nu =\ell _1=4$. 
Both factors $P^{\flat}$ and $P^{\dagger}$ can be minorized by their 
respective values $36.3\ldots$ and $0.129\ldots$
for $q=1/2$, so 
$P^{\bullet}\geq P^{\bullet}|_{\nu =4,q=1/2}>36.3\cdot 0.129=4.68\ldots$ and 

$$|\Theta ^*(q,-q^{-\nu -1/2})|>4.68\cdot 0.288^2=0.388\ldots~.$$
At the same time, for $x=-q^{-\nu -1/2}\leq -q^{-9/2}$ and for 
$q\in [\tilde{q}_1,1/2]$, 
the majoration $1/(|x|-1)$ of $|G|$ (see (\ref{majorizeG}))
is maximal for $q=1/2$, $\nu =4$, 
in which case it equals $0.046\ldots <0.388\ldots$. 

For $n\geq 2$, 
we use the inequality $Q\geq e^{-(\pi ^2/6)n}$ 
which holds true for $q\in K_n$, 
see Lemma~4 in~\cite{KoFAA}. As in the case $q\in K_1$, one obtains 
$R|_{x=-q^{-\nu -1/2}}>Q$, $\mathbb{N}\ni \nu \geq \ell _n$. 
The product $P$ is represented in the form 
$P=P^{\triangle}P^{\dagger}$ (with $P^{\dagger}$ as above), where 

$$P^{\triangle}:=\prod _{m=1}^{\nu}(1-q^{-\nu -1/2+m})=
q^{-\nu ^2/2}(-1)^{\nu}P^{\sharp}~~~,~~~
P^{\sharp}:=\prod _{m=1}^{\nu }(1-q^{\nu +1/2-m})~.$$
It is clear that $P^{\sharp}>P^{\dagger}>(1-\sqrt{q})Q$; the rightmost 
inequality follows from $1-q^{j-1/2}<1-q^j$, $j\in \mathbb{N}$. 
Set $h:=1-1/(n+1)$, $f^*:=(1-1/x)^{-x}$, and $\tau :=(1-h^{1/2})^2$.
The factor 
$q^{-\nu ^2/2}$ is minorized by 
$h^{-8(n+1)^2}=((f^*)(n+1))^{8(n+1)}$.
The function $f^*$ is decreasing on $[2,\infty )$, 
with $f^*(2)=4$ and $\lim _{x\rightarrow \infty}f^*(x)=e$, 
therefore $q^{-\nu ^2/2}\geq ((f^*)(n+1))^{8(n+1)}>e^{8(n+1)}$. 
Hence 

$$\begin{array}{c}
\tau =((1/(n+1))/(1+h^{1/2}))^2>1/4(n+1)^2~~~~\,  
{\rm and}\\ \\ 
|\Theta ^*|=|QPR|>Q^2|P|
=Q^2|P^{\triangle}P^{\dagger}|>Q^2(P^{\dagger})^2q^{-\nu ^2/2}>
Q^4(1-\sqrt{q})^2q^{-\nu ^2/2}\\ \\ >Q^4\tau e^{8(n+1)}>
e^{-4(\pi ^2/6)n}\tau e^{8(n+1)}=
e^{(8-2\pi ^2/3)n+8}\tau >e^{n+8}/4(n+1)^2~.\end{array}$$
The function $e^{x+8}/4(x+1)^2$ is increasing on $[2,\infty)$, so  
$|\Theta ^*|>e^{10}/36$.  
For $|G|$ one obtains

$$\begin{array}{ccccccc}
|G|&\leq &1/(q^{-\nu -1/2}-1)&\leq &1/(h^{-\nu -1/2}-1)&\leq&
1/(h^{-4(n+1)-1/2}-1)\\ \\&\leq &1/(e^4h^{-1/2}-1) 
&\leq &1/(e^4-1)&<&e^{10}/36~.\end{array}$$
\end{proof}

\begin{lm}\label{lmI}
For $j\in \mathbb{N}$, the double zero of $\theta (\tilde{q}_j,.)$ 
belongs to the interval $I:=[-1226,0]$.
\end{lm}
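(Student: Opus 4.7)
My plan is to combine Lemma~\ref{lmLnu} with a Rouch\'e comparison between $\theta$ and $\Theta^*$ on the vertical strips cut out by the lines $\mathcal{L}_\nu(\tilde{q}_j)$. For each $j$, let $n=n(j)$ be the smallest positive integer such that $\tilde{q}_j\in K_n$; Lemma~\ref{lmLnu} then provides $|\Theta^*|>|G|$ on every line $\mathcal{L}_\nu(\tilde{q}_j)$ with $\nu\ge\ell_n$.

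The next step is to apply Rouch\'e on a rectangle with vertical sides along $\mathcal{L}_{\nu+1}$ and $\mathcal{L}_{\nu}$ (both $\nu\ge\ell_n$) and horizontal sides $\mathrm{Im}\,x=\pm M$ for $M$ large. Along the horizontal sides, $|G|\le 1/(|x|-1)$ decays, while each factor of $P$ and $R$ is strictly larger in modulus off the real axis than on it (the monotonicity already exploited in the excerpt to place $\mathcal{L}_\nu(q)$), so $|\Theta^*|>|G|$ on the whole boundary once $M$ is sufficiently large. Inside the rectangle, $\Theta^*=QPR$ has the single real zero $-\tilde{q}_j^{-\nu-1}$; by Rouch\'e, $\theta$ also has a unique zero there, which must be real, since complex zeros come in conjugate pairs. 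Hence every zero of $\theta(\tilde{q}_j,\cdot)$ with $\mathrm{Re}\,x<-\tilde{q}_j^{-\ell_n-1/2}$ is real and simple, and in particular the double zero (being real, negative, and the rightmost zero) lies in $(-\tilde{q}_j^{-\ell_n-1/2},0)$.

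It remains to bound $\tilde{q}_j^{-\ell_{n(j)}-1/2}$ uniformly in $j$. For $n(j)=1$ only $j=1$ contributes, giving $\tilde{q}_1^{-9/2}<200$. For $n(j)\ge 3$, the constraint $\tilde{q}_j>1-1/n(j)$ yields $\tilde{q}_j^{-4(n+1)-1/2}<\big(n/(n-1)\big)^{4(n+1)+1/2}$, a sequence decreasing in $n$ whose maximum value at $n=3$ is $(3/2)^{16.5}\approx 803<1226$. The delicate case is $n(j)=2$, i.e.\ $\tilde{q}_j\in(1/2,2/3]$, where the crude bound $2^{12.5}\approx 5793$ (which is precisely the constant from part~(1) of Theorem~\ref{maintm}) is too weak. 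I expect this to be the main obstacle, to be overcome by exploiting the precise known values of the finitely many $\tilde{q}_j$ in this range---starting with $\tilde{q}_2=0.5169\ldots$---together with a sharper estimate of the double zero's position for those specific values of $q$, which is feasible because only finitely many $\tilde{q}_j$ lie in the interval $(1/2,2/3]$.
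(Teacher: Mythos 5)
Your Rouch\'e scheme is sound in structure: combining Lemma~\ref{lmLnu} on the vertical sides with the decay of $|G|$ and the growth of $|P|$ on the horizontal sides does show that $\theta (\tilde{q}_j,.)$ has exactly one (hence real and simple) zero in each strip between consecutive lines $\mathcal{L}_{\nu +1}(\tilde{q}_j)$ and $\mathcal{L}_{\nu}(\tilde{q}_j)$, $\nu \geq \ell _{n(j)}$, so the double zero must lie to the right of $\mathcal{L}_{\ell _{n(j)}}(\tilde{q}_j)$. But the quantitative step then fails exactly where you say it does, and you do not repair it: for $\tilde{q}_j\in (1/2,2/3]$ (the case $n(j)=2$) your bound is $2^{12.5}\approx 5793$, far above $1226$, and the proposed fix --- exploiting ``the precise known values of the finitely many $\tilde{q}_j$ in this range together with a sharper estimate of the double zero's position'' --- is never carried out. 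You would need to determine which $\tilde{q}_j$ lie in $(1/2,2/3]$ and locate their double zeros; that computation is essentially the content of the lemma for those $j$, so as written the argument has a genuine hole precisely in the hardest case.

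The paper avoids this entirely by a different route: it quotes a separation-in-modulus theorem from \cite{KoAA} stating that for $|q|\leq 1-1/(\alpha _0n)$, $n\geq 5$, $\alpha _0=\sqrt{3}/(2\pi )$, all zeros of modulus $\geq |q|^{-n+1/2}$ are simple and separated into distinct annuli. Hence any multiple zero of $\theta (q,.)$ with $|q|\in [1-1/(\alpha _0(n-1)),1-1/(\alpha _0n)]$ has modulus at most $\gamma _n:=(1-1/(\alpha _0(n-1)))^{-n+1/2}$; since $1-1/(5\alpha _0)<\tilde{q}_1$, the whole range $q\in [\tilde{q}_1,1)$ is covered by $n\geq 6$, the sequence $\gamma _n$ is decreasing there, and $\gamma _6=1225.1\ldots$ gives the interval $I$ at once. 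If you want to keep your line-based approach, you must import such a result (or an explicit numerical verification) to handle $n(j)=2$; without it the proof is incomplete.
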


\begin{proof}We use a result of~\cite{KoAA}: 
{\em Consider the function $\theta$ 
with $q$, $x\in \mathbb{C}$, $|q|<1$. Set 
$\alpha _0:=\sqrt{3}/(2\pi )=0.27\ldots$. Then for $k\geq n\geq 5$ 
and for $|q|\leq 1-1/(\alpha _0n)$, there exists exactly one (and simple) 
zero $\xi _k$ of 
$\theta (q,.)$ satisfying the conditions 
$|q|^{-k+1/2}<|\xi _k|<|q|^{-k-1/2}$, no zero is of modulus  
$|q|^{-k\pm 1/2}$, and exactly $n-1$ zeros 
counted with multiplicity are of modulus $<|q|^{-n+1/2}$.}

Hence for $|q|\in [1-1/(\alpha _0(n-1)),1-1/(\alpha _0n)]$, for any multiple zero 
$\xi$ of $\theta$ one has $|\xi |\leq \gamma _n:=(1-1/(\alpha _0(n-1)))^{-n+1/2}$. 
Indeed, 
the quantity $|q|^{-n+1/2}$ is maximal for $|q|=1-1/(\alpha _0(n-1))$ and 
no multiple zero of $\theta$ is of modulus $\geq |q|^{-n+1/2}$. 
For $n\geq 6$, the 
sequence $\gamma _n$ is decreasing, $\gamma _6=1225.1\ldots$ and 
$1-1/(5\alpha _0)=0.27\ldots <\tilde{q}_1$, so for $q\in (0,1)$, 
any double zero of $\theta$ is in~$I$.
\end{proof}

Simple zeros of $\theta$ (real or complex) depend continuously on $q$. 
Two simple real zeros of $\theta (\tilde{q}_j^-,.)$ coalesce for $q=\tilde{q}_j$ to become 
a complex pair for $q=\tilde{q}_j^+$ (as we say, a pair born from the double zero 
of $\theta (\tilde{q}_j,.)$), see \cite{KoBSM1}. 
For each $q$ fixed, the line $\mathcal{L}_{\nu}(q)$ is to the right of 
$\mathcal{L}_{\nu +1}(q)$. 
As $q$ increases, the number $-q^{-\nu -1/2}$ increases and $\mathcal{L}_{\nu}(q)$ moves 
from left to right.  
For any $q\in K_1$, the line $\mathcal{L}_{\nu}(q)$, $\nu \geq \ell _1=4$, 
is to the left of the double zero $-7.5\ldots$  
of $\theta (\tilde{q}_1,.)$, see \cite{KoSh} or~\cite{KoBSM1}; 
one has $\mathcal{L}_4(\tilde{q}_1)~:~$Re\,$x=-196.7\ldots$ and 
$\mathcal{L}_4(1/2)~:~$Re\,$x=-22.6\ldots$. Recall that 
$\tilde{q}_2=0.51\ldots \not\in K_1$. Hence for 
$q\in K_1\setminus \tilde{q}_1$, there is exactly one CCP, 
the one born from the double zero of $\theta (\tilde{q}_1,.)$.  
For $q\in K_1$, there are no zeros of $\theta$ on $\mathcal{L}_4(q)$, so as $q$ 
increases from $\tilde{q}_1$ to $1/2$, the line 
$\mathcal{L}_4(q)$ does not encounter the only CCP of $\theta$ and thus 
there are no CCPs on and to the left of~$\mathcal{L}_4(q)$. 

Thus 
there are no CCPs on and to the left 
of $\mathcal{L}_{\ell _2}(1/2)=\mathcal{L}_{12}(1/2)$. 
One 
has $\mathcal{L}_{12}(1/2)~:~$Re\,$x=-5792.6\ldots$,  
$\mathcal{L}_{12}(0.56)~:~$Re\,$x=-1404.9\ldots$ and  
$\mathcal{L}_{14}(0.56)~:~$Re\,$x=-4479.9\ldots$ (here and below the 
values 
$0.56$, $0.6$ and $0.63$ of $q$ are chosen for the convenience of computation). 
As $q$ increases from $1/2$ to $0.56$, the line $\mathcal{L}_{12}(q)$ 
moves from Re\,$x=-5792.6\ldots$ 
to Re\,$x=-1404.9\ldots$ and encounters no zeros of $\theta$ on its way. 
As CCPs can be 
born only on the interval $I$, for $q\in [1/2,0.56]$, 
no CCPs are born to the left of $\mathcal{L}_{12}(q)$. 
For $q=0.56$, there are no CCPs 
with Re\,$x\leq -4479.9\ldots$, and for $q\in [\tilde{q}_1,0.56]$, 
there are no CCPs 
with Re\,$x\leq -5792.6\ldots$. 

One has $\mathcal{L}_{14}(0.6)~:~$Re\,$x=-1647.4\ldots$ and 
$\mathcal{L}_{16}(0.6)~:~$Re\,$x=-4576.1\ldots$. For $q\in [\tilde{q}_1,0.6]$, 
the line $\mathcal{L}_{14}(q)$ encounters no zeros 
of $\theta$ and it does not intersect $I$, 
so there are no 
CCPs on and to the left of it. 
One has $\mathcal{L}_{16}(0.63)~:~$Re\,$x=-2045.8\ldots$ and 
$\mathcal{L}_{18}(0.63)~:~$Re\,$x=-5154.6\ldots$. Thus there are no CCPs 
on and to the left of $\mathcal{L}_{16}(q)$ for $q\in [0.6,0.63]$. 
One has $\mathcal{L}_{18}(2/3)~:~$Re\,$x=-1810.0\ldots$, 
therefore there are no CCPs on and to the left of $\mathcal{L}_{18}(q)$ for 
$q\in [0.63,2/3]$, hence 
there are no CCPs in Re\,$x\leq -5792.6\ldots$ for $q\in K_2$.

Consider the numbers $b_n:=-(1-1/n)^{-4(n+1)-1/2}$, $\mathbb{N}\ni n\geq 2$. 
These are the values of 
$-q^{-\ell _n-1/2}$ for $q=1-1/n\in K_n$. The sequence $b_n$ 
is increasing, with $b_2=-5792.6\ldots$ and $b_3=-804.4\ldots$. Hence for 
$n\geq 3$, $b_n\in I$. For $q\in [2/3,1)$, there is at least one number 
$-q^{-\nu _0-1/2}$, $\nu _0\in \mathbb{N}$, 
in the interval $(-5792.6\ldots ,-1226)$; this follows from 
$1226\cdot (3/2)<5792.6$. The corresponding line $\mathcal{L}_{\nu _0}(q)$ 
contains no zeros of $\theta$ for $q\in K_n$; this results from 
Lemma~\ref{lmLnu} and from $b_n\in I$ for $n\geq 3$. For each line 
$\mathcal{L}_{\nu}(q)$, $\nu \geq \nu _0$, it is true 
that for $q\in K_n$, there are no CCPs on and to the left of it. 
As $q$ increases in $K_n$, the line $\mathcal{L}_{\nu _0}(q)$ 
might begin to intersect the interval $I$, 
but it is always true that any line $\mathcal{L}_{\nu _0+k}(q)$, 
$k\in \mathbb{N}$, which does not intersect $I$ 
has no CCPs on it and to its left (and there is at least one such line 
for Re\,$x\geq -5792.6\ldots$). Hence there are no CCPs for 
$q\in [\tilde{q}_1,1)$ and Re\,$x\leq -5792.6\ldots$.   
\end{proof}

\begin{proof}[Proof of part (2)] 
We use again a result of \cite{KoPMD}: {\em For any $q\in (-1,0)$, all zeros 
of $\theta (q,.)$ belong to the strip $\{ |{\rm Im}~x|<132\}$.} We have to show 
that the real part of any 
complex zero of $\theta$ belongs to the interval $(-364.2,364.2)$. 
For $q\in (-1,0)$, we consider in the same way the intervals 
$J_n:=[-1+1/(n+1),\bar{q}_1]$, $n\geq 3$. We set $\ell _n:=4(n+1)$. 
For $q\in J_n$, we consider the vertical lines 
$\mathcal{L}_{\nu}(q)~:~$Re\,$x=-|q|^{-\nu -1/2}$ and 
$\mathcal{R}_{\nu}(q)~:~$Re\,$x=|q|^{-\nu -1/2}$, $\mathbb{N}\ni \nu\geq \ell _n$. 
As $|q|$ increases, the line $\mathcal{L}_{\nu}(q)$ moves to the right and $\mathcal{R}_{\nu}(q)$ 
moves to the left. 
We compare the factors $\mu :=1-q^m$, $\lambda :=1+xq^m$ and $\chi :=1+q^{m-1}/x$ 
in cases A) $q=q_*\in (0,1)$, Re\,$x=-(q_*)^{-\nu -1/2}$ and 
B) $q=-q_*\in (-1,0)$, Re\,$x=\pm |q_*|^{-\nu -1/2}$; we denote them by 
$\mu _A$, $\mu _B$, $\lambda _A$, $\lambda _B$, $\chi _A$ or $\chi _B$ 
according to the case. Clearly, $|$Im\,$\lambda _A|=|$Im\,$\lambda _B|$, 
$|$Im\,$\chi _A|=|$Im\,$\chi _B|$ and either $\mu _B=1-q_*^m=\mu _A\in (0,1)$ or 
$\mu _B=1+q_*^m>1>\mu _A$, either $|$Re\,$\lambda _B|=|1-q_*^{-\nu -1/2+m}|=|$Re\,$\lambda _A|$ or 
$|$Re\,$\lambda _B|=|1+q_*^{-\nu -1/2+m}|>|$Re\,$\lambda _A|$ and either 
$|$Re\,$\chi _B|=|1-q_*^{-\nu -3/2+m}/|x|^2|=|$Re\,$\chi _A|$ or 
$|$Re\,$\chi _B|=|1+q_*^{-\nu -3/2+m}/|x|^2|>|$Re\,$\chi _A|$.
%
%
Hence $|\Theta ^*(-q_*,\pm |q_*|^{-\nu -1/2})|>
|\Theta ^*(q_*,-(q_*)^{-\nu -1/2})|$.  

The majoration (\ref{majorizeG}) of $|G|$ remains valid. 
The quantity $1/(|x|-1)$ takes the same values for 
$x=\pm |q_*|^{-\nu -1/2}$, so $|\Theta ^*|>|G|$ 
along $\mathcal{L}_{\nu}(q)$ and $\mathcal{R}_{\nu}(q)$. The double zero $y_j$ 
of $\theta (\bar{q}_j,.)$, $j\in \mathbb{N}$, belongs to the interval 
$\tilde{I}:=[-237,237]$ (with  
$y_1=-2.9\ldots$ and $y_2=2.9\ldots$, see~\cite{KoPRSE2}); 
this follows from $|\bar{q}_1|>1-1/(12\alpha _0)=0.69\ldots$ and 
$\gamma _{13}=89.9\ldots<237$, see Lemma~\ref{lmI} and its proof. 

One has $b_4=-364.1\ldots$, $b_5=-236.7\ldots$, so for $n\geq 5$, $b_n\in \tilde{I}$. For 
$q\in J_n$, there exists a number $|q|^{-\nu -1/2}\in (237,364)$ with 
$\nu \geq \ell _n$ (because $237/364<|\bar{q}_1|$) 
and a line $\mathcal{L}_{\nu}(q)$ (resp. $\mathcal{R}_{\nu}(q)$) 
with no CCPs on and to the left (resp. right) of it. 
By analogy with the case $q\in (0,1)$ one shows that for $q\in (-1,\bar{q}_1]$, 
the moduli of the real parts of the complex zeros of $\theta$ are bounded by 
$|b_4|<364.2$. 
\end{proof}

\end{document}